\newtheorem{thm}{Theorem}
\numberwithin{thm}{section}
\newtheorem*{thm*}{Theorem}
\newtheorem{lemma}[thm]{Lemma}
\newtheorem*{lemma*}{Lemma}
\newtheorem{prop}[thm]{Proposition}
\newtheorem*{prop*}{Proposition}
\newtheorem{cor*}{Corollary}
\newtheorem*{rmk*}{Remark}
\newtheorem*{rmks*}{Remarks}
\newtheorem*{not*}{Notation}
\newtheorem*{claim*}{Claim}
\newtheorem*{fact*}{Fact}
\theoremstyle{definition}
\newtheorem*{namedprop}{\theoremname}
\newcommand{\theoremname}{testing}
\numberwithin{equation}{section}
\def\R{\mathbb{R}}
\def\C{\mathbb{C}}
\def\N{\mathbb{N}}
\def\Z{\mathbb{Z}}
\def\tr{\mathrm{tr \,}}
\def\LC{\mathrm{LC}}
\def\sign{\mathrm{sign \,}}
\def\6{\partial}
\begin{document}

\title{On the Measure of the Absolutely Continuous Spectrum for Jacobi Matrices}

\author{Mira Shamis\textsuperscript{1} and Sasha Sodin\textsuperscript{2}}
\footnotetext[1]{shamis@math.huji.ac.il, Einstein Institute of Mathematics
Edmond J. Safra Campus, Givat Ram, The Hebrew University of Jerusalem
Jerusalem 91904, Israel}
\footnotetext[2]{sodinale@post.tau.ac.il, School of Mathematical Sciences,
Tel Aviv University, Ramat Aviv, Tel Aviv 69978, Israel} \maketitle

\abstract{We apply the methods of classical approximation theory (extreme
properties of polynomials) to study the essential support $\Sigma_\text{ac}$
of the absolutely continuous spectrum of Jacobi matrices. First, we prove an upper bound
on the measure of $\Sigma_\text{ac}$ which takes into account the value distribution
of the diagonal elements, and implies the bound due to Deift--Simon and Poltoratski--Remling.

Second, we generalise the differential inequality of Deift--Simon for the
integrated density of states associated with the absolutely continuous
spectrum to general Jacobi matrices.}

\section{Introduction}

In this work we consider Jacobi matrices
\begin{equation}\label{eq:op}
J(a, b) =
    \left(\begin{array}{cccccc}
    b(1) & a(1) & 0 & 0 & 0 &\cdots \\
    a(1) & b(2) & a(2) & 0 & 0 &\cdots \\
    0 & a(2) & b(3) & a(3) & 0 & \cdots \\
    0 & 0 & \ddots & \ddots & \ddots &
    \end{array}\right)~.
\end{equation}
We assume that
\[ a(n) \in [c, C], b(n) \in [-C, C] \quad \text{for some $0 < c < C < \infty$.}\]

In this case $J(a, b)$ defines a bounded self-adjoint operator on $\ell^2(\N)$:
\begin{equation}\label{eq:op1}
(J \psi)(n) = a(n-1) \psi(n-1) + a(n) \psi(n+1) + b(n) \psi(n)~, \quad n \in \N~,
\end{equation}
where formally $a(0)=1$, $\psi(0)=0$.

The important subclass of {\em ergodic} operators is constructed as follows.
Let $(\Omega, \mu, T)$ be an ergodic system, and let $A, B: \Omega \to \R$
be bounded measurable functions. Then every $\omega \in \Omega$ defines an
operator $J_\omega = J(a_\omega, b_\omega)$, via
\[ a_\omega(n) = A(T^n \omega), \quad b_\omega(n) = B(T^n \omega)~. \]

If $J(a, b)$ is an arbitrary Jacobi matrix, we denote its spectrum
\[ \left\{ E \, \big| \, \text{$J-E$ is not invertible} \right\} \]
by $\sigma(J)$. The equivalence class of the set
\[ \Sigma_\text{ac} = \left\{ E \, \big| \,
    \text{$\lim_{\epsilon \downarrow 0} \mathrm{Im} (J - E - i \epsilon)^{-1}(1, 1)$ exists
    and differs from zero}\right\} \]
modulo sets of measure zero is called the essential support of the absolutely continuous spectrum.

For $n = 1, 2, \cdots$, denote
\[ k_n(E) = \frac{1}{n} \# \left\{ 1 \leq j \leq n \, \Big| \, \lambda_j \leq E \right\}~, \]
where $\lambda_j$ are the eigenvalues of the top-left $n \times n$ block of $J(a, b)$.
If the limit
\[ k(E) = \lim_{n \to \infty} k_n(E)\]
exists for almost every $E \in \R$, it is called the integrated density of states.

In particular, if $\{ J_\omega \}$ is an ergodic family of operators, it is known (see
Pastur and Figotin \cite{FP}) that the integrated density of states exists for almost
every $\omega \in \Omega$.

\vspace{2mm}

In 1983, Deift and Simon \cite{DS} proved  the following inequality.
If $J$ is an ergodic discrete Schr\"odinger operator, then for a.e.\
$E \in \Sigma_\text{ac}(J)$

\begin{equation}\label{eq:ds.loc}
\frac{d}{dE} \left\{ - 2 \cos \left( \pi k(E) \right) \right\}
    = 2 \pi \sin \left( \pi k(E) \right) \frac{dk(E)}{dE} \geq 1~.
\end{equation}

As $k(-\infty) = 0$ and $k(+\infty) = 1$, they immediately deduced

\begin{equation}\label{eq:ds.glob}
\left| \Sigma_\text{ac}(J) \right| \leq 4~.
\end{equation}

\noindent A different proof of (\ref{eq:ds.glob}) was given by Last
in \cite{L}, who deduced it from a stronger inequality.

In 2008, Poltoratski and Remling \cite{PR} proved that the inequality (\ref{eq:ds.glob}) holds for general
discrete Schr\"odinger operators (without assuming ergodicity). Moreover, the measure of the
essential support of the a.c.\ spectrum of a general Jacobi matrix $J(a, b)$ satisfies
\begin{equation}\label{eq:rp.glob}
\left| \Sigma_\text{ac}(J) \right| \leq 4 \liminf_{n \to \infty} A_n~,
\end{equation}
where

\begin{equation}
A_n = \Big[ a(1) a(2) \cdots a(n) \Big]^{\frac{1}{n}}~.
\end{equation}

Our goal is two-fold. First, we show that the inequality (\ref{eq:rp.glob}) follows from an extremal
property of Chebyshev polynomials (the P\'olya inequality).  This approach can be stretched further, to give
a more precise estimate on $\left| \Sigma_\text{ac}(J) \right|$ in terms of the value distribution of
the sequence $\{b(n)\}_{n=1}^\infty$.
For example, we prove an estimate on the measure of $\Sigma_\text{ac}$ intersected with an arbitrary
interval; the bound is always as good as the right-hand side of (\ref{eq:rp.glob}), and improves
on it when many of the $b(j)$ are far from our interval. To formulate the precise statement,
we introduce some notation.

Let $(E_L, E_R)$ be an interval (which may be infinite.) Consider the set of indices
\[ I_n = \left\{ 1 \leq j \leq n \, \big| \, b(j) \in (E_L - 2 M - A_n, E_R + 2M + A_n) \right\}~,\]
where
\[ M = \sup_{1 \leq j < \infty} a(j) = \frac{\|J(a, 0)\|}{2}~, \]
and let
\[ D_n = \prod_{1 \leq j \leq n, j \notin I_n}
                \Big( \min (|b(j)-E_R|, |b(j)-E_L|) - 2M\Big)~. \]

\begin{thm}\label{thm:1.1}
In the notation above,
\[ \left| \Sigma_\text{ac}(J(a, b)) \cap (E_L, E_R) \right|
    \leq 4 \liminf_{n \to \infty} \left[ \frac{A_n^n}{D_n} \right]^{\frac{1}{\# I_n}} \]
\end{thm}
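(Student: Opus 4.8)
The plan is to combine an extremal (P\'olya) estimate for the characteristic polynomials of the truncations $J_n$ of $J(a,b)$ with the $L^2$-normalisation of the orthonormal polynomials against the spectral measure. Let $P_n(E)=\det(E-J_n)$ be the characteristic polynomial of the top-left $n\times n$ block $J_n$; it is monic of degree $n$, with roots the eigenvalues $\lambda_1,\dots,\lambda_n$. Let $\mu$ be the spectral measure of the cyclic vector $\delta_1=(1,0,0,\dots)$, let $p_n=P_n/A_n^n$ be the associated orthonormal polynomials, and let $w(E)=\tfrac1\pi\lim_{\eps\downarrow0}\mathrm{Im}\,(J-E-i\eps)^{-1}(1,1)$ be the density of the absolutely continuous part of $\mu$, so that $\Sigma_\text{ac}=\{w>0\}$ modulo null sets and $\int |p_n|^2\,w\,dE\le \int|p_n|^2\,d\mu=1$ for \emph{every} $n$.

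The crucial point, which is what lets the $\liminf$ over $n$ in the statement survive, is that this last bound holds for every single $n$, not merely along a subsequence. Fixing $\eta>0$ and setting $G_\eta=\{E\in(E_L,E_R):w(E)\ge\eta\}$, so that $|G_\eta|\uparrow|\Sigma_\text{ac}(J(a,b))\cap(E_L,E_R)|$ as $\eta\downarrow0$, the estimate $\int_{G_\eta}|p_n|^2\,dE\le 1/\eta$ and Chebyshev's inequality show that $|p_n|\le K$ off a subset of $G_\eta$ of measure at most $1/(\eta K^2)$. Hence for every $n$ and every $K>0$,
\[
\left|\{E\in(E_L,E_R):|p_n(E)|\le K\}\right|\ \ge\ |G_\eta|-\frac{1}{\eta K^2}~.
\]

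Next I would bound the same set from above by an extremal argument. On $(E_L,E_R)$ I factor $P_n=Q_n R_n$ into monic polynomials, with $R_n$ collecting the eigenvalues far from the interval and $Q_n$ the rest. Since the off-diagonal part of $J_n$ has norm at most $2M$, Weyl's inequality matches the decreasingly ordered eigenvalues of $J_n$ with the ordered diagonal entries $b(j)$ to within $2M$; the eigenvalues matched to the indices $j\notin I_n$ then lie at distance at least $\min(|b(j)-E_R|,|b(j)-E_L|)-2M$ from every $E\in(E_L,E_R)$. Thus $\deg R_n=n-\#I_n$, $\deg Q_n=\#I_n$, and $|R_n(E)|\ge D_n>0$ on the interval. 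Consequently $|p_n(E)|\le K$ forces $|Q_n(E)|=|P_n(E)|/|R_n(E)|\le K A_n^n/D_n$, and the P\'olya inequality applied to the monic polynomial $Q_n$ of degree $\#I_n$ gives $\big|\{E:|Q_n(E)|\le K A_n^n/D_n\}\big|\le 4\,[K A_n^n/D_n]^{1/\#I_n}$.

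Combining the two displays yields, for all $n$ and $K$, the inequality $|G_\eta|-1/(\eta K^2)\le 4\,K^{1/\#I_n}[A_n^n/D_n]^{1/\#I_n}$. Taking $\liminf_n$ along a subsequence realising the liminf on which $\#I_n\to\infty$ (the generic case) sends $K^{1/\#I_n}\to1$, so the right-hand side tends to $4\liminf_n[A_n^n/D_n]^{1/\#I_n}$; letting $K\to\infty$ and then $\eta\downarrow0$ gives the theorem. I expect the main obstacle to be the factorisation step: one must check that the ordered Weyl matching carries exactly the indices $j\notin I_n$ to eigenvalues separated from $(E_L,E_R)$, so that the resulting product over far eigenvalues is bounded below by precisely $D_n$ and $Q_n$ has exactly degree $\#I_n$, with $D_n>0$ making the division legitimate. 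The remaining inputs—the orthonormality bound, Chebyshev's inequality, and the P\'olya inequality—are standard and enter only at the level stated.
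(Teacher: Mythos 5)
Your proposal is correct in its main line, and it reaches the theorem by a genuinely different route in the key analytic step. The paper derives the smallness of its polynomials on the a.c.\ spectrum from subordinacy theory: by Khan--Pearson, $\limsup_n \frac1n\ln|\Delta_n(E)|\le 0$ a.e.\ on $\Sigma_\text{ac}$ (where $\Delta_n=\tr\Phi_n$), upgraded by Egoroff to $|\Delta_n|\le e^{n\epsilon_n}$ on a set $\Sigma^\epsilon$ exhausting $\Sigma_\text{ac}$. You instead use the $L^2(\mu)$-normalisation of the orthonormal polynomials together with Chebyshev's inequality on $G_\eta=\{w\ge\eta\}$, which is a different (and self-contained) source of the same input. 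The second half of your argument --- matching the zeros to the diagonal entries within $2M$ by a perturbation/Weyl argument, factoring out the far zeros to get the lower bound $D_n$, and applying P\'olya to the remaining monic factor of degree $\#I_n$ --- coincides with the paper's (the paper works with the periodised truncation whose characteristic polynomial is $A_n^n\Delta_n$, while you use the plain truncation, which is exactly what makes orthonormality available; both matchings are legitimate, and the step you flag as the ``main obstacle'' is handled in the paper by the same ordered-eigenvalue perturbation argument). Your route buys something real: your error factor $K^{1/\#I_n}$ is \emph{uniform in $n$}, so it tends to $1$ as soon as $\#I_n\to\infty$, whereas the paper's factor $e^{n\epsilon_n/\#I_n}$ requires $\#I_n/n$ bounded away from zero; this forces the paper into a separate, more delicate argument (with an auxiliary parameter $\delta_n$) whenever $\liminf \#I_n/n=0$, where it in fact proves the stronger conclusion that the measure vanishes. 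Your method needs no such extra work in that regime.

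The one genuine gap is the phrase ``the generic case.'' A subsequence realising the liminf on which also $\#I_n\to\infty$ need not exist, and when $\#I_n$ stays bounded your displayed inequality is useless: for bounded $\#I_n$ the factor $K^{1/\#I_n}$ blows up as $K\to\infty$ (and $\#I_n=0$ makes both the exponent and the division by $R_n$'s complementary factor degenerate). You must close this case separately, but it is easy. The correct dichotomy is: either $\#I_n\to\infty$ along the full sequence, in which case any minimising subsequence works and your argument is complete; or $\liminf_n \#I_n<\infty$. In the latter case, since $A_n\ge c>0$, every window $(E_L-2M-A_n,\,E_R+2M+A_n)$ contains the fixed window $(E_L-2M-c,\,E_R+2M+c)$, so only finitely many $b(j)$ lie in the fixed window. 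Replacing those finitely many diagonal entries by values far from the interval is a self-adjoint finite-rank perturbation, which leaves $\Sigma_\text{ac}$ unchanged (Kato--Rosenblum); the modified operator satisfies $\sigma(J')\subset\overline{\{b'(j)\}}+[-2M,2M]$, which is disjoint from $(E_L,E_R)$ by the same perturbation bound you already use, so $\left|\Sigma_\text{ac}(J(a,b))\cap(E_L,E_R)\right|=0$ and the claimed bound holds trivially. (This mirrors the parenthetical remark in the paper's proof covering empty $I_n$.) With this case added, your proof is complete.
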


\pagebreak
\begin{rmks*}\hfill
\begin{enumerate}
\item Note that every one of the factors in the definition of $D_n$ is at least $A_n$, since the product
is taken over
\[ b(j) \notin (E_L - 2 M - A_n, E_R + 2M + A_n)~, \]
and is much larger if $b(j)$ is far from $(E_L, E_R)$. Thus, $D_n$ measures the number of $b(j)$
far from $(E_L, E_R)$. The numbers $M$ and $A_n$ appear in the definitions for a technical reason.
\item If all the $b(j)$ are in $(E_L - 2 M - A_n, E_R + 2M + A_n)$, the product is empty,
and we set $D_n = 1$. When none of the $b(j)$ are in the interval ($\# I_n = 0$),
and actually in the more general case
\[ \liminf_{n \to \infty} \frac{\# I_n}{n} = 0~, \]
we show that
\[ \left| \Sigma_\text{ac}(J(a, b)) \cap (E_L, E_R) \right| = 0~.\]
\item Taking $E_L = -\infty$ and $E_R = +\infty$ in the theorem, we recover
(\ref{eq:rp.glob}) (since $D_n \geq A_n^{n - \# I_n}$.)
\end{enumerate}
\end{rmks*}

As an example, consider a periodic Jacobi matrix $J(1, b)$, where $b$ takes only two values
$0$ ($m$ times), and $R \geq 5$ ($\ell$ times.)  Then
\[ \Sigma_\text{ac}(J(1,b)) \subset (-2, 2) \cup (R-2, R+2)~, \]
since $J(1, b)$ can be considered as a perturbation of the diagonal matrix $J(0, b)$
by the free Laplacian $J(1, 0)$, $\| J(1, 0) \| = 2$.

Let us apply Theorem~\ref{thm:1.1} to every one of these two intervals. We have:
\[\begin{split}
|\Sigma_\text{ac}(J(1, b)) \cap (-2, 2)| &\leq \frac{4}{(R-4)^{\ell/m}}~, \\
|\Sigma_\text{ac}(J(1, b)) \cap (R-2, R+2)| &\leq \frac{4}{(R-4)^{m/\ell}}~.
\end{split}\]
When $R \to \infty$, both expressions tend to zero, thus the measure of the absolutely continuous
spectrum tends to zero.

\vspace{2mm}\noindent
Second, an even more elementary approach allows us to prove (directly) the following special case of (\ref{eq:ds.loc}).
Assume that $a, b$ are periodic sequences of period $q$ (namely, $a(n+q) = a(n), b(n+q) = b(n)$ for $n = 1,2,3,\cdots$.)
From the Bloch--Floquet theory (see, e.g., \cite{L}) $\Sigma_\text{ac}(J)$ is the union of $q$
closed intervals (bands), which may overlap only at the edges. Denote these bands $B_1, B_2, \cdots, B_q$
(ordered from right to left.)

\begin{thm}\label{thm:per} Under the assumptions above,
\begin{equation}\label{eq:bandlengths}
\left| B_j \right| \leq 2 A_q \left[ \cos \frac{\pi (j-1)}{q}  - \cos \frac{\pi j}{q} \right]
\end{equation}
for $j = 1, 2, \cdots, q$. Equality is attained if (and only if) $a$ and $b$ are constant.
\end{thm}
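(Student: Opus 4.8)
The plan is to route everything through the Bloch--Floquet discriminant and reduce the band estimate to a single pointwise inequality, which is exactly a periodic, direct form of the local Deift--Simon inequality \eqref{eq:ds.loc}. Let $T(E)$ be the one-period monodromy matrix and $\Delta(E)=\tr T(E)$. Since every one-step transfer matrix carries a factor $1/a(n)$ and $\det T\equiv1$, $\Delta$ is a real polynomial of degree $q$ with leading coefficient $A_q^{-q}$, and $\Sigma_\text{ac}=\{E:|\Delta(E)|\le2\}=\bigcup_{j}B_j$, with $\Delta$ strictly monotone from $\pm2$ to $\mp2$ on each $B_j$. Writing $\Delta(E)=2\cos(\pi q\,k(E))$ identifies $k$ with the (normalised) quasimomentum: $k$ runs from $(j-1)/q$ to $j/q$ across $B_j$. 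I set $\phi=\pi k$, so $\phi$ ranges over $[\pi(j-1)/q,\pi j/q]$ on $B_j$.

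Second, I would express the band length as $|B_j|=\int |dE/d\phi|\,d\phi$ over that $\phi$-interval and compare with
\[ \int_{\pi(j-1)/q}^{\pi j/q} 2A_q\sin\phi\,d\phi = 2A_q\Big(\cos\tfrac{\pi(j-1)}{q}-\cos\tfrac{\pi j}{q}\Big). \]
Thus \eqref{eq:bandlengths} follows at once from the pointwise bound $|dE/d\phi|\le 2A_q\sin\phi$, i.e.\ $\frac{d}{dE}\{-2A_q\cos(\pi k)\}\ge1$; using $\sqrt{4-\Delta^2}=2|\sin\pi q k|$ this is the same as
\[ |\Delta'(E)|\ \ge\ \frac{q}{A_q}\,\frac{|\sin \pi q k|}{\sin \pi k}. \]

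Third --- the crux --- I would prove this pointwise inequality. Factoring $\Delta'(E)=\frac{q}{A_q^q}\prod_{m=1}^{q-1}(E-c_m)$ (the $c_m$ are the critical points, one in each gap by Rolle) and using the Chebyshev identity $\frac{\sin\pi q k}{\sin\pi k}=2^{q-1}\prod_{m=1}^{q-1}(\cos\pi k-\cos\tfrac{m\pi}{q})$, the claim becomes the product inequality $\prod_m|E-c_m|\ge\prod_m 2A_q|\cos\pi k-\cos\tfrac{m\pi}{q}|$. The point is that the true discriminant $\Delta$ and the quasimomentum enter on the two sides, and they agree only in the constant case; I expect to prove it by exhibiting the (squared) gap as a manifestly nonnegative expression in the deviations of $a,b$ from constants. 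For $q=2$, with $c_1=\tfrac12(b(1)+b(2))$, this is the identity
\[ (E-c_1)^2-a(1)a(2)\big(\Delta(E)+2\big)=\Big(\tfrac{b(1)-b(2)}{2}\Big)^2+(a(1)-a(2))^2\ \ge\ 0, \]
which is precisely the required bound and vanishes iff $a,b$ are constant.

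Finally I would integrate the pointwise inequality over each band to obtain \eqref{eq:bandlengths}; equality there forces equality a.e.\ in the pointwise bound, hence $a,b$ constant, while conversely constant $a\equiv A_q$, $b\equiv b_0$ give $\Delta=2T_q\big(\tfrac{E-b_0}{2A_q}\big)$ (with $T_q$ the Chebyshev polynomial) and exactly the asserted band lengths. The main obstacle is the general-$q$ product inequality together with its equality case: one must bound the critical-point product $\prod_m|E-c_m|$ from below by its ``free'' counterpart uniformly along the band, reconciling $\Delta$ with the quasimomentum $\cos\pi k$. The $q=2$ sum of squares displays the mechanism, but organising it for all $q$ --- presumably via AM--GM or the convexity of $\log|\Delta'|$ --- is the technical heart.
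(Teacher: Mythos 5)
Your reduction is sound as far as it goes: writing $\Delta(E)=2\cos(\pi q\,k(E))$, the band estimate \eqref{eq:bandlengths} is indeed equivalent to the pointwise inequality $|\Delta'(E)|\ge \frac{q}{A_q}\frac{|\sin \pi q k|}{\sin \pi k}$ on each band, and your $q=2$ verification is correct: one computes $a(1)a(2)\,(\Delta(E)+2)=(E-b(1))(E-b(2))-(a(1)-a(2))^2$ and $a(1)a(2)(\Delta+2)=4A_2^2\cos^2\pi k$, so your sum-of-squares identity gives exactly $(E-c_1)^2\ge 4A_2^2\cos^2\pi k$ with equality iff $a,b$ are constant. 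But the general-$q$ product inequality
\[ \prod_{m=1}^{q-1}|E-c_m| \;\ge\; \prod_{m=1}^{q-1} 2A_q\Bigl|\cos\pi k(E)-\cos\tfrac{m\pi}{q}\Bigr|~, \]
which you explicitly defer to AM--GM or ``convexity of $\log|\Delta'|$'', is not a technical detail: it \emph{is} the theorem. It is precisely the periodic form of the Deift--Simon inequality \eqref{eq:ds.loc}, so your plan derives Theorem~\ref{thm:per} from the statement the paper set out to prove first (the paper's logical order is the reverse: Theorem~\ref{thm:per} is proved directly by polynomial extremal methods and is then used to establish Theorem~\ref{thm:ds.loc}). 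Neither of your suggested mechanisms closes the gap: the factorwise bound $|E-c_m|\ge 2A_q|\cos\pi k-\cos\frac{m\pi}{q}|$ holds with equality factor by factor only in the free case and fails in general under perturbation (some factors shrink while others grow), so a per-factor AM--GM argument cannot work; and no convexity property of $\log|\Delta'|$ controlling the product is stated or evident. The two sides couple the critical points $c_m$ of $\Delta$ with the value $\cos\pi k(E)$ at the same $E$ in a genuinely global way, and this is exactly where a new idea is needed.

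The paper circumvents any differential inequality by a deformation argument. It fixes $E^*\in B_j$, normalises a zero of $\Delta$ in $B_j$ to $0$, writes $\Delta(E)=(a(1)\cdots a(q))^{-1}E\,\mathcal{T}(E)$ with $\mathcal{T}$ the interpolation-type polynomial \eqref{eq:interp} (here $s(E)=(2a(1)\cdots a(q))^{-1}E$), and uses the differentiation formula of Proposition~\ref{prop:formula} together with the sign analysis \eqref{sign1}--\eqref{sign3} to show that $|\mathcal{T}(E^*)|$, hence $|\Delta(E^*)|$, decreases as band edges $E_k^\pm$ move together to close gaps; by the Chebyshev alternation theorem the minimising configuration is $\Delta(E)=2T_q\bigl((E-2\cos\frac{\pi(j-1/2)}{q})/(2A_q)\bigr)$, whence $E^*$ lies in the $j$-th band of the shifted free Laplacian \eqref{eq:lapl.bu} and \eqref{eq:bandlengths} follows, with equality forcing the Chebyshev case, i.e.\ constant $a,b$. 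If you want to salvage your route you would need an independent proof of the periodic pointwise inequality for all $q$; your $q=2$ identity is suggestive but no generalisation is supplied, so as it stands the proposal has a genuine gap at its declared ``technical heart.''
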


We emphasise again that Theorem~\ref{thm:per} is not new, and
follows in particular from (\ref{eq:ds.loc}). A parallel inequality
for periodic Schr\"{o}dinger operators on the real line can be
found, e.g., in the work of Garnett and Trubowitz \cite{GT}. We
provide a direct proof using extremal properties of polynomials, and
then use Theorem~\ref{thm:per} to recover the Deift--Simon
inequality (\ref{eq:ds.loc}) in full generality, and generalise it
to the non-ergodic case:

\begin{thm}\label{thm:ds.loc} Let $J(a, b)$ be a Jacobi operator, and let $n_i \uparrow \infty$
be a sequence such that the limit $k_{\{n_i\}} = \lim_{i \to \infty} k_{n_i}$ exists. Then
\begin{equation}\label{eq:loc1}
2 \pi \cdot \liminf_{i \to \infty} A_{n_i} \cdot \sin \left( \pi k_{\{n_i\}}(E) \right) \,
\frac{dk_{\{n_i\}}(E)}{dE} \geq 1
\end{equation}
for almost every $E \in \Sigma_\text{ac}(J)$.

\noindent In particular, if the integrated density of states exists for $J$, we have:
\begin{equation}\label{eq:loc}
2 \pi \cdot \lim_{n \to \infty} A_n
    \cdot \sin \left( \pi k(E) \right) \frac{dk(E)}{dE} \geq 1
\end{equation}
for almost every $E \in \Sigma_\text{ac}(J)$.
\end{thm}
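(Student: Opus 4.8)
The plan is to derive \eqref{eq:loc1} from the periodic band-length estimate \eqref{eq:bandlengths} in two steps: first I would upgrade Theorem~\ref{thm:per} into a pointwise differential inequality for periodic operators, and then treat a general $J$ by periodic approximation, periodizing its first $n_i$ coefficients.

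\emph{Periodic operators.} Fix a period-$q$ operator and regard it as periodic of period $Nq$, $N\in\N$ large. Since the product of the $a(j)$ over one long period is the $N$-th power of the product over one short period, $A_{Nq}=A_q$; the spectrum is unchanged, but the $q$ bands are now subdivided into $Nq$ bands $B^{(Nq)}_m$ across which $k$ runs through the successive intervals $[\tfrac{m-1}{Nq},\tfrac{m}{Nq}]$. Applying \eqref{eq:bandlengths} at period $Nq$ gives
\[
\bigl|B^{(Nq)}_m\bigr|\ \le\ 2A_q\Bigl[\cos\tfrac{\pi(m-1)}{Nq}-\cos\tfrac{\pi m}{Nq}\Bigr]
\ =\ 2A_q\int_{(m-1)/(Nq)}^{m/(Nq)}\pi\sin(\pi t)\,dt .
\]
Dividing by the band length and letting $N\to\infty$, the sub-band through a fixed energy $E$ shrinks to $\{E\}$, and Lebesgue differentiation of the monotone function $k$ converts this into $2\pi A_q\sin(\pi k(E))\,k'(E)\ge 1$ for a.e.\ $E$ in the spectrum. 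Writing $g=-\cos(\pi k)$, this says $g'\ge\tfrac{1}{2A_q}$ on the bands and $g'=0$ on the gaps, so $g$ is non-decreasing and, for $E_1<E_2$,
\[
g(E_2)-g(E_1)\ \ge\ \frac{1}{2A_q}\,\bigl|(E_1,E_2)\cap\sigma(J)\bigr| .
\]

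\emph{Approximation.} For a general $J(a,b)$ and the given $n_i$, let $J^{(n_i)}$ be the operator of period $n_i$ obtained by repeating $a(1),\dots,a(n_i)$ and $b(1),\dots,b(n_i)$, with density of states $\kappa_i$ and $A$-parameter $A_{n_i}$. The Bloch (twisted) matrices $J^{(n_i)}(\theta)$ are rank-$2$ perturbations of the Dirichlet block $J_{n_i}$, so their eigenvalue-counting functions differ by at most $2$; averaging over $\theta\in[0,2\pi)$ gives $\bigl|k_{n_i}-\kappa_i\bigr|\le 2/n_i$. Hence $\kappa_i\to k_{\{n_i\}}$ at every continuity point, and, passing to a subsequence along which $A_{n_i}\to\liminf_i A_{n_i}$, the integrated inequality above for each $J^{(n_i)}$ passes to
\[
\cos\bigl(\pi k_{\{n_i\}}(E_1)\bigr)-\cos\bigl(\pi k_{\{n_i\}}(E_2)\bigr)
\ \ge\ \frac{1}{2\liminf_i A_{n_i}}\,
\liminf_{i\to\infty}\bigl|(E_1,E_2)\cap\sigma(J^{(n_i)})\bigr| .
\]

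\emph{Localization to $\Sigma_\text{ac}$ and the main obstacle.} What remains, and what I expect to be the crux, is the spectral lower bound
\[
\liminf_{i\to\infty}\bigl|(E_1,E_2)\cap\sigma(J^{(n_i)})\bigr|
\ \ge\ \bigl|(E_1,E_2)\cap\Sigma_\text{ac}(J)\bigr| ,
\]
i.e.\ that $\Sigma_\text{ac}(J)$ is, in Lebesgue measure, asymptotically covered by the spectra of the periodic approximants. This is the only genuinely spectral input and the single place where absolute continuity is used; I would obtain it from the boundedness on average of the transfer matrices on $\Sigma_\text{ac}$ together with a Last--Simon-type lower semicontinuity of the approximate spectra. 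Granting it, the previous display becomes $g(E_2)-g(E_1)\ge\tfrac{1}{2\liminf_i A_{n_i}}\,|(E_1,E_2)\cap\Sigma_\text{ac}(J)|$ with $g=-\cos(\pi k_{\{n_i\}})$; dividing by $E_2-E_1$ and shrinking $(E_1,E_2)$ to a point of Lebesgue density one of $\Sigma_\text{ac}(J)$ at which $k_{\{n_i\}}$ is differentiable yields $\pi\sin(\pi k_{\{n_i\}}(E))\,k'_{\{n_i\}}(E)\ge\tfrac{1}{2\liminf_i A_{n_i}}$, which is \eqref{eq:loc1}. Finally, \eqref{eq:loc} is the special case $n_i=n$ with the integrated density of states present, so that $k_{\{n_i\}}=k$.
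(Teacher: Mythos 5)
Your periodic-case step is sound and is essentially the paper's own argument: regarding the period-$q$ operator as having period $Nq$, applying the band-length bound \eqref{eq:bandlengths}, and differentiating in the limit $N\to\infty$ (the paper makes the band subdivision precise via the identity $\Delta_{Nq}=2T_N(\Delta_q/2)$). The gap is in your final step, which you yourself flag as the crux and then leave unproved: the inequality $\liminf_i \bigl|(E_1,E_2)\cap\sigma(J^{(n_i)})\bigr| \geq \bigl|(E_1,E_2)\cap\Sigma_\text{ac}(J)\bigr|$. This is not a technical afterthought obtainable from transfer-matrix bounds; it is the hard content of the theorem. What subordinacy (Khan--Pearson) plus Egoroff actually gives is that, up to an $\epsilon$-set, $\Sigma_\text{ac}$ lies in the enlarged sublevel set $S_i^+=\{E : |\Delta_{n_i}(E)|^{1/n_i}\leq 1+\delta_i\}$ with $\delta_i\to 0$ --- \emph{not} in the periodic spectrum $\sigma(J^{(n_i)})=\{|\Delta_{n_i}|\leq 2\}$. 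Since $n_i\delta_i$ need not tend to zero, the threshold $(1+\delta_i)^{n_i}$ can blow up, and $S_i^+$ can be genuinely larger than $\sigma(J^{(n_i)})$; even pointwise boundedness of the transfer matrices on $\Sigma_\text{ac}$ would only yield $|\Delta_{n_i}(E)|\leq C$, which still does not place $E$ in the approximants' spectra. A point of $\Sigma_\text{ac}$ may well sit in a gap of every periodization, so the covering claim cannot be extracted from a lower-semicontinuity principle of the kind you invoke.

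What one can control cheaply is the \emph{total} measure of $S_i^+$: P\'olya's inequality gives $|S_i^+|\leq 4A_{n_i}(1+\delta_i)(1+o(1))$, which recovers the global bound \eqref{eq:rp.glob} but says nothing about how the excess $S_i^+\setminus\sigma(J^{(n_i)})$ distributes among the intervals $I_j=[p_j,p_{j-1}]$ cut out by the level sets of $k_{\{n_i\}}$ --- and the local statement \eqref{eq:loc1} requires exactly this local control. The paper fills the hole with two nontrivial ingredients you have no substitute for: first, the deformation argument based on Proposition~\ref{prop:formula} shows that $|I_j\cap S_i^+|$ is maximized when $S_i^+$ is a single interval (closing gaps only increases the local measure); second, in that extremal situation the polynomials $A_{n_i}^{n_i}\Delta_{n_i}$ are asymptotically extremal for the Chebyshev problem on $S_i^+$, so Szeg\H{o}'s theorem on the zero distribution of near-extremal polynomials forces their zeros to equidistribute like Chebyshev zeros, and since by the definition of the $p_j$ a fraction $1/q$ of the zeros falls in $I_j$, one gets $|I_j\cap S_i^+|\leq 2A_-\bigl[\cos\frac{\pi(j-1)}{q}-\cos\frac{\pi j}{q}\bigr](1+o(1))$ (Lemma~\ref{lem}), from which the theorem follows by Lebesgue differentiation exactly as in your last paragraph. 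Until you either prove your covering inequality or replace it with an argument of this strength, the proof does not go through.
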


Applying this result to an arbitrary partial limit of the sequence $k_n$ yields
another proof of the inequality (\ref{eq:rp.glob}).

\vspace{3mm}\noindent
{\bf Acknowledgment.} We are grateful to Yoram Last and to Jonathan Breuer
for the stimulating discussions, and for their helpful comments on the
preliminary version of this paper.

The first author is supported in part by The Israel Science Foundation (Grant No.\ 1169/06)
and by Grant 2006483 from the United States-Israel Binational Science Foundation (BSF),
Jerusalem, Israel. The second author is supported in part by the Adams Fellowship Program
of the Israel Academy of Sciences and Humanities and by the ISF.

\section{Preliminaries}\label{s:prel}

\subsection{Transfer Matrices}

Given an operator $J(a, b)$ of the form (\ref{eq:op}), we consider the associated
eigenvalue equation
\[ J \psi = E \psi~, \quad E \in \R~, \quad \psi: \N \to \C~.\]
For any $n \geq 1$, we consider the one-step transfer matrices
\[ \left( \begin{array}{cc} \frac{E-b(n)}{a(n)} & - \frac{a(n-1)}{a(n)} \\
                            1 & 0 \end{array} \right):
    \left( \begin{array}{c} \psi(n) \\ \psi(n-1) \end{array} \right)
    \mapsto \left( \begin{array}{c} \psi(n+1) \\ \psi(n) \end{array}\right)~,\]
and define the $n$-step transfer matrix
\begin{equation}\label{eq:phi}
\Phi_n(E) = \left( \begin{array}{cc} \frac{E-b(n)}{a(n)} & - \frac{a(n-1)}{a(n)} \\
                                    1                 & 0 \end{array} \right) \cdots
\left( \begin{array}{cc} \frac{E-b(1)}{a(1)} & - \frac{a(0)}{a(1)} \\
                                    1                 & 0 \end{array} \right)~.
\end{equation}

Denote
\begin{equation}\label{eq:a}
\mathcal{A}(J) = \left\{ E\in\R\big| \limsup_{n\rightarrow\infty}\frac{1}{n}\ln\|\Phi_n(E)\| = 0 \right\}~.
\end{equation}
Since
\[ \|\Phi_n\| \geq \sqrt{|\det{\Phi_n}|} = \frac{1}{\sqrt{|a(n)|}}~, \]
we have:
\[ \liminf \frac{1}{n}\ln\|\Phi_n(E)\| \geq \liminf \frac{1}{n} \ln \frac{1}{\sqrt{|a(n)|}}
    \geq - \limsup \frac{\ln |a(n)|}{2n} = 0~, \]
and therefore
\[
\frac{1}{n}\ln\|\Phi_n(E)\| \to 0 \quad \text{for every} \quad E \in \mathcal{A}~,
\]
and, since $|\tr \Phi_n| \leq 2 \| \Phi_n \|$,
\begin{equation}\label{eq:conv0}
\limsup_{n \to \infty} \frac{1}{n}\ln\left| \tr \Phi_n(E) \right| \leq 0 \quad \text{for every} \quad E \in \mathcal{A}~.
\end{equation}
Note that $\tr \Phi_n(E)$ is a real polynomial of $E$ with leading coefficient $A_n^{-n}$.
It follows from the Bloch--Floquet theory (see, e.g., \cite{L}) that
\begin{equation}\label{eq:bf}
\tr \Phi_n(E) = A_n^{-n} \det \left( E - J_n(a, b) \right)~,
\end{equation}
where $J_n = J_n(a, b)$ is the $n \times n$ matrix
\[ J_n = \left(\begin{array}{ccccccc}
    b(1) & a(1) & 0 & 0 & \cdots & 0 & -i a(n)  \\
    a(1) & b(2) & a(2) & 0 & \cdots & 0 & 0 \\
    0 & a(2) & b(3) & a(3) & \cdots & 0 & 0 \\
    0 & 0 & \ddots & \ddots & \cdots  & 0 & a(n-1) \\
    i a(n) & 0  & 0 & \cdots &  \cdots & a(n-1) & b(n)
    \end{array}\right)~.\]

Finally, from the subordinacy theory of Khan-Pearson \cite{KP},
\begin{equation}\label{eq:subord}
\Sigma_\text{ac}(J) \subset \mathcal{A}(J)~.
\end{equation}

\subsection{The Alternation Theorem, and a corollary}

Let $K\subset\R$ be a compact set. Denote
\begin{equation}\label{eq:Ln}
L_n(K) = \inf_{P_n\in\mathbb{P}_n} \max_{E\in{K}} |P_n(E)|~,
\end{equation}
where the infimum is over all monic polynomials of degree $n$.

\begin{thm*}[Chebyshev alternation theorem, see {\cite[\S I.5]{B}}] The infimum in $(\ref{eq:Ln})$
is attained on a unique polynomial $P_n$, which is uniquely characterized by the
following: there exists an $(n + 1)$-tuple of points in $K$
\[    E_1 > E_2 > E_3 > \cdots > E_{n+1} \]
on which $P_n$ attains the maximum with alternating signs:
\[ P_n(E_k) = (-1)^{k + 1} \max_{E \in K} |P_n(E)|~. \]
\end{thm*}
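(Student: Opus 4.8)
The plan is to establish the three assertions in order—existence of a minimizer, sufficiency of the equioscillation (alternation) property for optimality together with uniqueness, and necessity of equioscillation—using only the elementary fact that a nonzero real polynomial of degree $d$ has at most $d$ zeros, plus one compactness argument. For existence, I would parametrise the monic polynomials of degree $n$ by their lower coefficients, writing $P(E) = E^n + c_{n-1}E^{n-1} + \cdots + c_0$ so that $\mathbb{P}_n \cong \R^n$ via $(c_0,\dots,c_{n-1})$, and note that $P \mapsto \max_{E\in K}|P(E)|$ is continuous in these coefficients. Since $K$ is infinite (it contains at least $n+1$ points in the intended application), fixing $n+1$ distinct points of $K$ and using the Vandermonde/Lagrange correspondence between coefficients and values shows that a bound on $\max_K|P|$ forces a bound on the coefficients. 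Hence a minimising sequence stays in a compact subset of $\R^n$, and any subsequential limit is a monic degree-$n$ polynomial attaining $L_n(K)$.

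For sufficiency and uniqueness, which I would prove together, suppose $P$ equioscillates: there are points $E_1 > \cdots > E_{n+1}$ in $K$ with $P(E_k) = (-1)^{k+1}L$, where $L = \max_K|P|$. Let $Q$ be any monic polynomial of degree $n$ with $\max_K|Q| \le L$. The difference $D = P - Q$ has degree at most $n-1$ since the leading terms cancel, and at each $E_k$ one has $\sign D(E_k) = \sign P(E_k)$ in the weak sense, because $|Q(E_k)| \le L = |P(E_k)|$. Thus $D$ changes sign (weakly) across each of the $n$ consecutive intervals $[E_{k+1},E_k]$, which by the intermediate value theorem yields at least $n$ zeros of $D$, counted with appropriate care at points where $D(E_k)=0$; as $\deg D \le n-1$ this forces $D \equiv 0$, i.e.\ $Q = P$. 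This simultaneously shows that any equioscillating $P$ is a minimiser and that the minimiser is unique.

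For necessity I would argue by contraposition: if a minimiser $P$ fails to equioscillate at $n+1$ points, I construct a perturbation lowering the norm. Write $L = \max_K|P|$, let $\mathcal{E} = \{E \in K : |P(E)| = L\}$ be the extremal set, and group $\mathcal{E}$ into maximal runs on which $\sign P$ is constant. If equioscillation fails there are at most $n$ such runs, separated by at most $n-1$ gaps. Choosing one point strictly inside each gap and letting $R$ be a polynomial of degree $\le n-1$ vanishing at those points, with overall sign fixed so that $R$ agrees in sign with $P$ on every run, the monic polynomial $P - \eps R$ then satisfies $\max_K|P - \eps R| < L$ for all sufficiently small $\eps > 0$, contradicting minimality.

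The hard part will be the last step: verifying that the perturbation strictly decreases the norm \emph{uniformly} over $K$. On the extremal set $|P - \eps R|$ falls below $L$ because $R$ matches the sign of $P$ there, while off the extremal set $|P| < L$ and a small perturbation stays below $L$; the delicate point is to combine these two regimes with a single choice of $\eps$. I would handle this by covering $\mathcal{E}$ and $K \setminus \mathcal{E}$ by neighbourhoods on which compactness of $K$ yields a uniform threshold for $\eps$, so that the strict decrease holds simultaneously at every point of $K$.
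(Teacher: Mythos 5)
Your proposal is correct in outline, but note that the paper does not prove this statement at all: it quotes the alternation theorem from Bernstein's book (\cite[\S I.5]{B}) as classical background, so there is no ``paper proof'' to match. What you have reconstructed is essentially the standard textbook argument: existence by identifying monic polynomials with their lower coefficients and using Lagrange interpolation at $n+1$ fixed points of $K$ to make the sup-norm coercive in the coefficients; sufficiency and uniqueness together by observing that for any competitor $Q$ the difference $D = P - Q$ has degree at most $n-1$ yet weakly alternates in sign at the $n+1$ equioscillation points, forcing $D \equiv 0$; and necessity by contraposition, perturbing a non-equioscillating minimiser by $\eps R$ where $R$ has degree at most $n-1$, vanishes once in each gap between consecutive sign-runs of the extremal set $\mathcal{E}$, and matches $\sign P$ on each run, with the uniform threshold for $\eps$ obtained by splitting $K$ into a compact neighbourhood of $\mathcal{E}$ and its complement. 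You correctly flag the one genuinely delicate step, the zero count for $D$ when $D(E_k) = 0$ at a node shared by two adjacent intervals; the standard repair is to count zeros with multiplicity (an interior node where $D$ vanishes without changing sign is a zero of even order, so it contributes two to the count), which restores the total of $n$ zeros and the contradiction with $\deg D \le n-1$.

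Two small points you glossed over, both easily patched. First, your contraposition tacitly equates ``fails the stated property'' with ``at most $n$ sign-runs'': conceivably a minimiser could have $n+1$ weak alternations with the \emph{wrong} orientation, $P(E_1) = -L$ at the rightmost point, which your perturbation does not address (it would need $\deg R = n$). This case is in fact impossible for any monic polynomial: $n$ sign changes among the runs force all $n$ zeros of $P$ to lie strictly to the left of the rightmost run, and monicity then makes $P$ positive there, so the orientation in the statement is automatic. Second, the theorem as stated implicitly requires $K$ to contain at least $n+1$ points (otherwise $L_n(K) = 0$ is attained non-uniquely); you noted this, and it is harmless here since the paper only applies the theorem, via P\'olya's inequality, to sets of positive measure.
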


For example, $L_n([-2, 2]) = 2$, and the minimum is attained
for the scaled Chebyshev polynomials of the first kind:
\[ P_n(E) = 2 T_n(E/2), \quad T_n(\cos(\theta)) = \cos(n\theta)~. \]

\noindent We cite a corollary of the Chebyshev Alternation Theorem, due to P\'olya (see \cite{B}.)

\begin{prop}[P\'olya]\label{prop:mescap} If $K$ is a compact set,
\[ L_n(K) \geq \frac{|K|^n}{2^{2n - 1}}~. \]
\end{prop}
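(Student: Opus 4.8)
The plan is to prove the Pólya inequality $L_n(K) \geq |K|^n / 2^{2n-1}$ by exploiting the Chebyshev alternation characterization of the extremal polynomial together with a quantitative lower bound on how much a bounded monic polynomial can increase. First I would let $P_n$ be the unique extremal monic polynomial from the Alternation Theorem, write $L = L_n(K)$, and record the $(n+1)$ alternation points $E_1 > \cdots > E_{n+1}$ in $K$ where $|P_n| = L$ with alternating signs. The key geometric idea is that between consecutive alternation points $P_n$ sweeps from $+L$ to $-L$ (or vice versa), so each interval $(E_{k+1}, E_k)$ must contain a zero of $P_n$; thus $P_n$ has $n$ real zeros, one in each gap, and these zeros lie in the convex hull of $K$.

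The main step is to relate $|K|$ to the total ``reach'' of the polynomial. I would use the substitution that puts $P_n$ in bijection with a scaled Chebyshev polynomial. Concretely, since $|P_n(E)| \leq L$ on all of $K$ but $P_n$ is monic of degree $n$, I would compare $P_n$ with the Chebyshev polynomial $\widetilde T_n$ whose extremal value on an interval of length $|K|$ is exactly $|K|^n/2^{2n-1}$ — this is precisely $L_n$ of an interval of that length, because $L_n([\alpha,\beta]) = 2\left(\tfrac{\beta-\alpha}{4}\right)^n = (\beta-\alpha)^n/2^{2n-1}$. The cleanest route is therefore to show that replacing $K$ by any interval containing it can only decrease $L_n$, i.e. $L_n(K) \geq L_n(\text{interval})$ is false in general, so instead one argues in the other direction: the extremal points together with the measure constraint force $L$ to be large.

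The sharper and more standard argument, which I expect to be the crux, proceeds via the inverse image of the segment $[-L, L]$ under $P_n$. I would consider the set $K' = P_n^{-1}([-L,L])$, which contains $K$, and analyze its Lebesgue measure. On each maximal subinterval where $P_n$ is monotone between a value $\pm L$ and the opposite $\mp L$, the change-of-variables $\int |P_n'(E)|\, dE = 2L$ over that branch lets me bound the length of $K'$ from below by an integral of $1/|P_n'|$; summing over branches and applying an appropriate convexity or AM–GM estimate on the locations of the zeros yields $|K| \leq |K'| \leq 4 L^{1/n}$, which rearranges to the claimed bound $L \geq |K|^n/2^{2n-1}$. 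The delicate point — the main obstacle — is controlling $\int dE/|P_n'(E)|$ over the full inverse image and showing the worst case is the single-interval Chebyshev configuration; this is exactly where the extremal (equioscillation) structure must be invoked to guarantee that the measure of $P_n^{-1}([-L,L])$ is maximized, equivalently that $|K|$ is as large as possible relative to $L$, precisely when $K$ is an interval and $P_n$ is the rescaled Chebyshev polynomial.
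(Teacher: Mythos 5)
The paper itself gives no proof of this proposition---it is quoted from Bernstein's book as a known corollary of the alternation theorem---so your proposal must stand on its own, and it does not: the central step is missing. Your opening reduction is the right one (with $L = L_n(K)$ and $P_n$ the extremal polynomial, $K \subset K' := P_n^{-1}([-L,L])$, so it suffices to bound $|K'|$), but the bound you then need, $|\{E : |P(E)| \leq L\}| \leq 4\left(L/2\right)^{1/n}$ for monic $P$ of degree $n$, \emph{is} P\'olya's inequality (applied to the compact set $K'$), so nothing has been reduced. Your sketch of how to prove it does not work as described: on a monotone branch of $P_n$ sweeping from $-L$ to $L$ the branch length is $\int_{-L}^{L} dt/|P_n'(P_n^{-1}(t))|$, and to bound the total length of $K'$ from \emph{above} (you write ``from below'', which is the wrong direction for your goal) you would need pointwise lower bounds on $|P_n'|$ over $K'$, which neither equioscillation nor ``an appropriate convexity or AM--GM estimate'' supplies. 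Worse, the branch identity $\int |P_n'|\,dE = 2L$ fails in general: equioscillation holds at $n+1$ points of $K$, not of $K'$, and a component of $K'$ may contain interior local extrema of $P_n$ of height strictly less than $L$, so monotone branches need not span the full range $[-L,L]$. You explicitly defer ``the delicate point''---that the single-interval Chebyshev configuration is extremal---but that is the entire content of the theorem, not a technical step. Two smaller defects: your stated target $|K'| \leq 4L^{1/n}$ yields only $L \geq |K|^n/2^{2n}$, off by a factor of $2$ from the claim (the correct form is $|K'| \leq 4(L/2)^{1/n}$, with equality for the scaled Chebyshev polynomial); and the aside about $L_n(K) \geq L_n(\text{interval containing } K)$ is muddled, since $L_n$ is monotone nondecreasing under inclusion and what is needed is the fixed-\emph{measure} comparison, namely that intervals minimize $L_n$ among compact sets of given measure.

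The classical way to close the gap (P\'olya's own, reproduced e.g.\ in Borwein--Erd\'elyi) is a gap-closing translation argument rather than a derivative estimate. By alternation $P_n$ has $n$ simple real zeros, and $K'$ is a finite union of closed intervals with no zeros in the gaps. Given a gap $(b_1, a_2)$ of length $g = a_2 - b_1$, factor $P_n = QR$ with $Q$ carrying the zeros to the left of the gap and $R$ those to the right, and set $\widetilde{P}(E) = Q(E)\,R(E+g)$. A zero-by-zero comparison of distances shows $|\widetilde{P}| \leq L$ on the set obtained from $K'$ by translating everything right of the gap leftward by $g$: for $E \leq b_1$ one has $|E + g - z| \leq |E - z|$ for each zero $z \geq a_2$ of $R$, and symmetrically on the translated right part. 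The new set has the same measure and one fewer gap, so after finitely many merges one obtains a monic degree-$n$ polynomial bounded by $L$ on a single interval $I$ with $|I| \geq |K'| \geq |K|$; the exact Chebyshev value $L_n(I) = 2(|I|/4)^n$ then gives $L \geq 2(|K|/4)^n = |K|^n/2^{2n-1}$, with the correct constant. (An alternative route: submultiplicativity $L_{m+n}(K) \leq L_m(K) L_n(K)$ gives $L_n(K) \geq \mathrm{cap}(K)^n$, and $\mathrm{cap}(K) \geq |K|/4$ for compact $K \subset \mathbb{R}$.) Without the merging lemma or some equivalent, your outline remains a restatement of the theorem rather than a proof of it.
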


\noindent If $K$ is an interval, equality is achieved.

\section{Proof of Theorem~\ref{thm:1.1}}

According to (\ref{eq:conv0}) and (\ref{eq:subord}), the polynomials $\Delta_n = \tr \Phi_n$
satisfy
\begin{equation}\label{eq:subord.1}
\limsup_{n \to \infty} \frac{1}{n} \ln |\Delta_n(E)| \leq 0 \quad \text{for a.e.\ $E \in \Sigma_\text{ac}(J(a,b))$}~.
\end{equation}
Fix $\epsilon > 0$; by Egoroff's theorem there exists $\Sigma^\epsilon$ such that
\[ |\Sigma_\text{ac}(J(a,b)) \setminus \Sigma^\epsilon| < \epsilon \]
and the convergence in (\ref{eq:subord.1}) is uniform on
$\Sigma^\epsilon$. That is,
\[ \frac{1}{n} \ln |\Delta_n(E)| \leq \epsilon_n~, \quad E \in \Sigma^\epsilon~,\]
where $\epsilon_n \to 0$, and thus
\[ |\Delta_n(E)| \leq \exp \left( n \epsilon_n \right)~. \]
Now recall (\ref{eq:bf}) and consider the matrix $J_n(a, b)$ as the perturbation of $J_n(0, b)$
by the matrix $J_n(a, 0)$ of norm $\| J_n(a, 0) \| \leq 2 M$. We see that the zeros
$E_1, \cdots, E_n$ of $\Delta_n$ can be numbered so that $|E_j - b(j)| \leq 2M$.

Then, for $E \in (E_L, E_R)$,
\[\begin{split}
|\Delta_n(E)|
    &= A_n^{-n}
         \prod_{j \in I_n} |E - E_j| \, \prod_{j \notin I_n} |E - E_j| \\
    &\geq A_n^{-n} \,
         \prod_{j \in I_n} |E - E_j| \,\,\, D_n~.
\end{split}\]
Therefore
\[ L_{\# I_n}(\Sigma^\epsilon \cap (E_L, E_R))
    \leq \frac{\exp \left( n \epsilon_n \right) A_n^n}{D_n}~. \]
By P\'olya's theorem (Proposition~\ref{prop:mescap}),
\[ |\Sigma^\epsilon \cap (E_L, E_R)|
    \leq 4 \left[ \frac{\exp \left( n \epsilon_n \right) A_n^n}{D_n}\right]^{\frac{1}{\# I_n}}~.\]
If $\{\#I_n/n\}$ is bounded away from zero, we can conclude the proof, taking the lower limit
as $n \to \infty$ and then the limit as $\epsilon \to 0$.

Suppose $\liminf \#I_n/n = 0$. Then we prove a stronger statement:
\[|\Sigma_\text{ac}(J(a,b)) \cap (E_L, E_R)| = 0~. \]
For simplicity of notation we assume that $I_n$ is not empty for sufficiently
large $n$ (otherwise $\sigma(J(a, b)) \cap (E_L, E_R)$ is a finite set by the same perturbation arguments
as above.) Let $\delta_n > 0$ be a small
parameter that we shall choose later. If
\[ E \in (E_L + \delta_n, E_R - \delta_n) \cap \Sigma^\epsilon~, \]
then by definition of $I_n$
\[ \prod_{j \notin I_n} |E - E_j| \geq (A_n + \delta_n)^{n - \# I_n}, \]
therefore
\[ \prod_{j \in I_n} |E - E_j| \leq \frac{\exp \left( n \epsilon_n \right) A_n^n}
                                         {(A_n + \delta_n)^{n - \# I_n}}~.\]
Hence
\[ L_{\# I_n}(\Sigma^\epsilon \cap (E_L+\delta_n, E_R-\delta_n))
    \leq \frac{\exp \left( n \epsilon_n \right)A_n^n}{(A_n + \delta_n)^{n - \# I_n}}~,\]
and
\begin{multline*}
|\Sigma^\epsilon \cap (E_L+\delta_n, E_R-\delta_n)| \\
    \leq 4 \left[ {\frac{\exp \left( n \epsilon_n \right) A_n^n}
                              {(A_n + \delta_n)^{n - \# I_n}}}\right]^{\frac{1}{\# I_n}}
    = 4 A_n \left[ {\frac{\exp \left( n \epsilon_n \right)}
                              {(1 + \delta_n/A_n)^{n - \# I_n}}}\right]^{\frac{1}{\# I_n}}~.
\end{multline*}
Now we can choose $\delta_n = A_n \Big( \epsilon_n + \sqrt\frac{\#I_n}{n} \Big)$ and
take the lower limit as $n \to \infty$.

\section{Proof of Theorem~\ref{thm:per}}

Let $J(a, b)$ be a periodic Jacobi operator of period $q$. From the Bloch--Floquet
theory (see, e.g., \cite{L})
\[ \Sigma_\text{ac}(J) = \left\{ E \, \Big| \, |\Delta(E)| \leq 2 \right\} = \bigcup_{j=1}^q B_j~, \]
where $B_j$ are closed intervals (bands) that may overlap only at edges, and the
discriminant $\Delta = \Delta_q$ is a real polynomial of degree $q$ with leading coefficient
$LC(\Delta) = A_q^{-q}$. If two bands overlap at a point $E$, then $E$ is an edge, that is, $\Delta(E)=\pm2$.

Therefore there exist
\[ E_1 > E_2 > \cdots > E_{q-1} \]
so that $\Delta(E_j) = 2 \cdot (-1)^j$ (which are endpoints of the bands.) Vice versa,
if $\Delta$ is a polynomial of degree $q$ with positive leading coefficient for which such
points exist, the set
\[ \Big\{ E \, \big| \, |\Delta(E)| \leq 2 \Big\} \]
is the union of $q$ bands. Therefore we study the dependence of the lengths of the bands
on $E_1,\cdots, E_{q-1}$. The correspondence between $\Delta$ and $E_1, \cdots, E_{q-1}$
is not one-to-one, and we shall deal with this problem later.

Our main technical tool is the following general formula. Fix an $m$-tuple of points
$E_1, \cdots, E_m$, and let $s$ be a function that is non-zero and differentiable in the
neighbourhood of the points $E_i$; denote
\begin{equation}\label{eq:interp}
\mathcal{T}(E; E_1, \cdots, E_m)
    = \sum_{i=1}^m \frac{(-1)^i}{s(E_i)} \prod_{j \neq i} \frac{E - E_j}{E_i - E_j}
        + \prod_{i=1}^m (E - E_i)~,
\end{equation}
and
\[ \mathcal{B}_i(E) = \prod_{j \neq i}(E - E_j), \quad \mathcal{B}_i = \mathcal{B}_i(E_i)~. \]
The polynomial $\mathcal{T}(E) = \mathcal{T}(E; E_1, \cdots, E_m)$ is uniquely determined by the conditions
\[ \begin{cases}
\deg \mathcal{T} = m~, &\\
\mathcal{T}(E_i) = (-1)^i/s(E_i)~, &1 \leq i \leq m~, \\
\LC(\mathcal{T}) = 1
\end{cases} \]
(where again $\LC(P)$ stands for the leading coefficient of a polynomial $P$.)
\begin{prop}\label{prop:formula} For any $E^*, E_1, \cdots, E_{m}$
\[  \frac{\6}{\6 E_k} \mathcal{T}(E^\ast; E_1, \cdots, E_m)
    = -\frac{\mathcal{B}_k(E^\ast)}{\mathcal{B}_k s(E_k)}\frac{\6}{\6 E} \Big|_{E=E_k} \Big[ \mathcal{T}(E; E_1, \cdots, E_m) s(E)\Big]~.\]
\end{prop}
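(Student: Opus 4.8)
The plan is to avoid differentiating the explicit formula \eqref{eq:interp} directly, and instead to exploit the three defining properties of $\mathcal{T}$ listed just above the proposition (degree $m$, the interpolation values, and monicity). Write $T(E) = \mathcal{T}(E; E_1, \dots, E_m)$ and set $g(E) = \frac{\partial}{\partial E_k} T(E)$, the derivative with respect to the \emph{parameter} $E_k$ taken while the argument $E$ is held fixed. Since $T$ is monic, its leading coefficient is the constant $1$, so differentiating in $E_k$ annihilates the top-degree term and $g$ is a polynomial in $E$ of degree at most $m-1$. The strategy is then to pin $g$ down by locating its zeros and its value at $E_k$, and finally to recognise the result as the claimed expression.

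For the zeros I would differentiate the interpolation identities $T(E_i) = (-1)^i/s(E_i)$ with respect to $E_k$. Here one must keep track of the fact that $E_k$ plays a double role: it is a parameter of the polynomial and, when $i=k$, also the point of evaluation. For $i \neq k$ the evaluation point $E_i$ does not move and the right-hand side is independent of $E_k$, so the chain rule gives simply $g(E_i)=0$. Thus $g$ vanishes at the $m-1$ points $\{E_j\}_{j\neq k}$, and, being of degree $\le m-1$, it must be a scalar multiple of $\mathcal{B}_k(E)=\prod_{j\neq k}(E-E_j)$; that is, $g(E)=c\,\mathcal{B}_k(E)$ for a constant $c$ depending on the parameters.

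To determine $c$, evaluate at $E=E_k$, where $\mathcal{B}_k(E_k)=\mathcal{B}_k$, so $c = g(E_k)/\mathcal{B}_k$. The value $g(E_k)$ comes from the $i=k$ case, which is the one delicate step: differentiating $T(E_k)=(-1)^k/s(E_k)$ totally in $E_k$ now produces an extra contribution $T'(E_k)$ from the moving evaluation point, yielding $T'(E_k)+g(E_k) = -(-1)^k s'(E_k)/s(E_k)^2$. Solving for $g(E_k)$ and substituting, I obtain
\[ \frac{\partial}{\partial E_k} T(E^\ast) = \frac{\mathcal{B}_k(E^\ast)}{\mathcal{B}_k}\left[ -T'(E_k) - \frac{(-1)^k s'(E_k)}{s(E_k)^2} \right]. \]

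It remains to match this with the stated right-hand side. Using $T(E_k)=(-1)^k/s(E_k)$, I would expand
\[ \frac{\partial}{\partial E}\Big|_{E=E_k}\big[ T(E)s(E) \big] = T'(E_k)s(E_k) + \frac{(-1)^k}{s(E_k)}s'(E_k), \]
and check that dividing by $s(E_k)$ and attaching the prefactor $-\mathcal{B}_k(E^\ast)/\mathcal{B}_k$ reproduces the previous display exactly. The only genuine obstacle is the careful chain-rule bookkeeping in the $i=k$ case, where the factor $T'(E_k)$ is easily lost; everything else is routine manipulation of a polynomial of known degree with known zeros.
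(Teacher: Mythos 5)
Your argument is correct, but it takes a genuinely different route from the paper's. The paper proves Proposition~\ref{prop:formula} by ``straightforward differentiation'' of the explicit interpolation formula (\ref{eq:interp}), i.e.\ by differentiating the Lagrange-type expansion term by term in the node $E_k$; you never touch that formula and instead argue from the three characterizing properties of $\mathcal{T}$. Your chain of reasoning is sound: monicity freezes the leading coefficient, so $g = \6 \mathcal{T}/\6 E_k$ is a polynomial in $E$ of degree at most $m-1$; the interpolation conditions at $E_i$, $i \neq k$, are independent of $E_k$, so $g$ vanishes at those $m-1$ points and must equal $c\,\mathcal{B}_k(E)$; and the constant is pinned down by totally differentiating the $i=k$ condition, where you correctly account for the extra $\mathcal{T}'(E_k)$ from the moving evaluation point --- the step most easily botched. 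The final matching also checks out: since $\mathcal{T}(E_k) = (-1)^k/s(E_k)$, one has $\frac{\6}{\6 E}\big|_{E=E_k}\big[\mathcal{T}(E)s(E)\big] = \mathcal{T}'(E_k)s(E_k) + (-1)^k s'(E_k)/s(E_k)$, and dividing by $-s(E_k)$ and multiplying by $\mathcal{B}_k(E^*)/\mathcal{B}_k$ reproduces your bracket exactly. What your approach buys is conceptual transparency: it explains \emph{why} the factor $\mathcal{B}_k(E^*)/\mathcal{B}_k$ must appear (it is forced by the zeros of the parameter-derivative) and why the derivative of the product $\mathcal{T}s$, rather than of $\mathcal{T}$ alone, shows up (it is precisely the total derivative of the $k$-th interpolation condition); it also generalises immediately to any family of monic polynomials defined by such interpolation data. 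What the direct computation buys is that it needs no auxiliary hypotheses, whereas your argument implicitly uses that the $E_i$ are pairwise distinct and that the coefficients of $\mathcal{T}$ are differentiable in $E_k$ --- both automatic here, since (\ref{eq:interp}) only makes sense for distinct nodes with $s(E_i) \neq 0$ and exhibits the coefficients as smooth functions of $E_k$, but worth a sentence given that you invoke the explicit formula nowhere else.
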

The proof of the proposition is via straightforward differentiation.\footnote{Similar methods were
used, for example, by Peherstorfer and Schiefermayr \cite{PS}. We thank Barry Simon for the reference.}
\begin{proof}[Proof of Theorem~\ref{thm:per}]
Fix $1 \leq j \leq q$, and let us show that
\[ \left| B_j \right| \leq 2 A_q \left[ \cos \frac{\pi (j-1)}{q}  - \cos \frac{\pi j}{q} \right]~.\]
There is a point $E_0 \in B_j$ such that $\Delta(E_0) = 0$, and without loss of generality $E_0 = 0$ (else replace
$b$ with $b - E_0$.) Then
\[ \Delta(E) = (a(1) \cdots a(q))^{-1} E \,\mathcal{T}(E)~, \]
where $\mathcal{T}(E)$ is a polynomial of degree $m = q-1$ such that
\[ \LC(\mathcal{T}) = 1, \quad \mathcal{T}(E_i) = (-1)^i \, \frac{2 \cdot a(1) \cdots a(q)}{E_i}~. \]
Therefore $\mathcal{T}$ is given by (\ref{eq:interp}) with $s(E) = (2 a(1) \cdots a(q))^{-1} \, E$. Fix $E^* \in B_j$,
then
\[ E_1 > E_2 > \cdots > E_{j-1} \geq E^* \geq E_j > \cdots > E_{q-1}~. \]
It is easy to see that the discriminant of the free Laplacian is given by
\[ \Delta_{J(1,0)} (E) = 2 T_q(E/2)~, \]
where $T_q$ is the $q$-th Chebyshev polynomial of the first kind. Therefore
\[ B_j(J(1, 0)) = [ 2 \cos \frac{\pi j}{q}, \, 2 \cos \frac{\pi(j-1)}{q}]~. \]
We shall show that $E^*$ also lies in the $j$-th band of
\begin{equation}\label{eq:lapl.bu}
J \left( A_q, - 2 \cos \frac{\pi(j - 1/2)}{q} \right)~,
\end{equation}
which is the free Laplacian, viewed as a periodic operator of period $q$, and shifted so
that $0$ is the $j$-th zero of its discriminant. The theorem immediately follows.

Without loss of generality assume $E^* > 0$. Fix $1 \leq k \leq q-1$, and let us study how $\Delta(E^*)$
varies with the change of $E_k$. We apply Proposition~\ref{prop:formula}, assuming for now that $E^* \neq E_{j-1}$.
It is easy to see that
\begin{equation}\label{sign1}
\sign \frac{\mathcal{B}_k(E^*)}{\mathcal{B}_k s(E_k)}  = (-1)^{j+k+1}~.
\end{equation}
Next, $\mathcal{T}(E) s(E)$ assumes the value $(-1)^k$ at two points in $(E_{k-1}, E_{k+1})$ which we denote $E_k^- \leq E_k^+$
($E_k^+$ is the left edge of $B_k$, and $E_k^-$ is the right edge of $B_{k+1}$.) Note that
\begin{multline*}
\mathcal{T}(E; E_1, \cdots, E_{k-1}, E_k^+, E_{k+1}, \cdots, E_{q-1}) \\
= \mathcal{T}(E; E_1, \cdots, E_{k-1}, E_k^-, E_{k+1}, \cdots, E_{q-1})~,
\end{multline*}
that is, the correspondence between $\mathcal{T}$ and the points $E_i$ is not one-to-one, and we shall
use this shortly.

One can see from Figure~1 that $\mathcal{T}(E)s(E)$ is increasing at $E = E_k^+$ if and only if $k$ is odd, and the opposite for
$E_k^-$:
\begin{equation}\label{sign2}
\sign \frac{d}{dE} \Big|_{E = E_k^\pm} \mathcal{T}(E) s(E) = \mp (-1)^k~,
\end{equation}
and also that
\begin{equation}\label{sign3}
\sign \mathcal{T}(E^*) = \sign \Big[ \mathcal{T}(E^*) s(E^*) \Big] = (-1)^{j+1}~.
\end{equation}

\begin{figure}[h]
\vspace{2.5cm}
\setlength{\unitlength}{1cm}
\begin{pspicture}(-2,-2)
\psline(0,0)(10,0)
\psset{arrows=-}
\psplot[plotstyle=curve]{.6}{9}{ x 1 sub x 3 sub mul x 6.5 sub mul x 8.7 sub mul 20 div}
\psline(0,1)(10,1)
\psline(0,-1)(10,-1)
\psline[linewidth=.09](0.75, 0)(1.4, 0)
\rput(1.4, .3){$E_3^-$}
\psline(1.4,.1)(1.4,-.1)
\rput(2.42,.3){{$E_3^+$}}
\psline(2.42,.1)(2.42,-.1)
\rput(2.95, .35){$0$}
\psline(3,.1)(3,-.1)
\rput(3.2, -.4){$E^*$}
\psline(3.2,.1)(3.2,-.1)
\rput(3.52,.3){{$E_2^-$}}
\psline(3.52,.1)(3.52,-.1)
\psline[linewidth=.09](2.42, 0)(3.52, 0)
\rput(6, .3){$E_2^+$}
\psline(6,.1)(6,-.1)
\rput(7,.3){$E_1^-$}
\psline(7,.1)(7,-.1)
\psline[linewidth=.09](6, 0)(7, 0)
\rput(8.42, .3){$E_1^+$}
\psline(8.42,.1)(8.42,-.1)
\psline[linewidth=.09](8.42, 0)(8.9, 0)
\rput(-.5,1){$+1$}
\rput(-.5,-1){$-1$}
\end{pspicture}
\caption[Fig. 1]{$\mathcal{T}(E)s(E)$ with $q = 4$ and $j = 3$}\label{fig:1}
\end{figure}
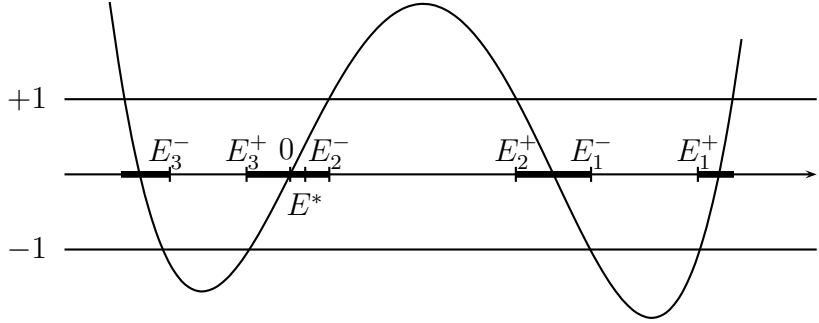

\noindent
Combining (\ref{sign1}), (\ref{sign2}), (\ref{sign3}), and Proposition~\ref{prop:formula},
we obtain:
\[
\sign \left[ \frac{\6}{\6 E_k} \left| \mathcal{T}(E^*) \right| \right] =
\begin{cases}
    -1, &E_k = E_k^-\\
    +1, &E_k = E_k^+
\end{cases}~.\]
Therefore, the value of $\left| \mathcal{T}(E^*) \right|$ decreases if and only if $E_k^-$ moves to the right,
which happens if and only if $E_k^+$ moves to the left. That is, if $E_k^-$ moves to the right, $E_k^+$ moves to the left, until they coincide.
Thus $|\mathcal{T}(E^*)|$ is minimal if $E_k^- = E_k^+$ (when the $k$-th band is glued to the $(k+1)$-th.)

This is true for any
$k = 1, 2, \cdots, q-1$, therefore $|\mathcal{T}(E^*)|$ (and $|\Delta(E^*)|$) is minimal when all the bands are glued together.
According to the Chebyshev Alternation Theorem, this is the case if and only if
\[ \Delta(E) = 2 T_q \left( \frac{E - 2 \cos \frac{\pi(j - 1/2)}{q}}{2 A_q} \right)~,\]
which is exactly the discriminant of (\ref{eq:lapl.bu}).

\end{proof}

\section{Proof of Theorem~\ref{thm:ds.loc}.}

We first consider the {\bf periodic case}, which is somewhat technically simpler,
and will also be used in the proof of the general case.

Assume that $J(a, b)$ is periodic of period $q$, with discriminant $\Delta_q$, and bands
\[ B_1 = [\ell_1, r_1], \cdots, B_q = [\ell_q, r_q]~.\]
We recall that a periodic operator can be considered ergodic (with respect to the ergodic system
$(\Z/q\Z, q^{-1}\sum_{i=0}^{q-1} \delta_{i}, \cdot \mapsto \cdot + 1)$, hence its density
of states is well defined. Let us discuss its structure.

The measure $k_q$ has one atom of mass $1/q$ in (the interior of) every $B_j$, hence
$k_q(r_j) - k_q(\ell_j) = 1/q$. Now, if we consider $J(a, b)$ as a periodic operator of period
$nq$, $n \geq 1$, then
\begin{equation}\label{eq:doubl}
\Delta_{nq}(E) = 2 T_n(\Delta_q(E)/2)~.
\end{equation}
Indeed, both sides of (\ref{eq:doubl}) are polynomials of degree $nq$ with leading coefficient
\[ A_{nq}^{-nq} = \big[ A_q^{-q} \big]^n \]
and with maximal absolute value $2$ on the spectrum attained $nq+1$
times with alternating signs, hence they coincide according to Chebyshev's
alternation theorem.

Therefore every band $B_j$ splits exactly into $n$ bands, and
\[ k_{nq}(r_j) - k_{nq}(\ell_j) = \frac{n}{nq} = \frac{1}{q}~. \]
Passing to the limit as $n \to \infty$, we obtain
\[ k(r_j) - k(\ell_j) = \frac{1}{q}~.\]
The function $k^{-1}$ is well-defined outside a countable set, and
\[ k^{-1} \left( \frac{j-1}{q} + 0 \right) = \ell_j~,
   k^{-1} \left( \frac{j}{q} - 0 \right) = r_j~.\]
According to Theorem~\ref{thm:per},
\[\begin{split}
&k^{-1} \left( \frac{j}{q} - 0 \right) - k^{-1} \left( \frac{j-1}{q} + 0 \right) \\
    &\qquad= |B_j| \leq  2 A_q \left[ \cos \frac{\pi (j-1)}{q}  - \cos \frac{\pi j}{q} \right] \\
    &\qquad= \int_{\frac{j-1}{q}}^{\frac{j}{q}}  2\pi A_q \, \sin \pi x \, dx~,
\end{split}\]
and also
\begin{multline}\label{eq:bu}
k^{-1} \left( \frac{j}{nq} - 0 \right) - k^{-1} \left( \frac{j-1}{nq} + 0 \right) \\
    \leq \int_{\frac{j-1}{nq}}^{\frac{j}{nq}}  2\pi A_q \, \sin \pi x \, dx~,
    \quad n = 1,2,\cdots~.
\end{multline}
Passing to the limit as $n \to \infty$, we obtain:
\[ \frac{d}{dx} k^{-1}(x) \leq 2\pi A_q \, \sin \pi x \quad \text{a.e. on $[0, 1]$.}\]
Taking $x = k(E)$ and using the chain rule, we obtain (\ref{eq:loc}).

\vspace{3mm}\noindent In the {\bf general case}, fix $q \geq 1$, and choose $q-1$
points $p_1 > p_2 > \cdots > p_{q-1}$
so that $k_{\{n_i\}}(p_j) = (q-j)/q$. Denote $I_j = [p_{j}, p_{j-1}]$ (with
$p_0 = +\infty$, $p_q = -\infty$), and set $A_- = \liminf_{i \to \infty} A_{n_i}$.

\begin{lemma}\label{lem}
$\left| I_j \cap \Sigma_\text{ac}(J(a,b)) \right|
 \leq 2 A_- \left[ \cos \frac{\pi (j-1)}{q}  - \cos \frac{\pi j}{q} \right]$.
\end{lemma}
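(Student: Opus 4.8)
The plan is to compare $J(a,b)$ with its periodic approximant. For each $i$, let $\tilde J^{(i)}$ be the periodic Jacobi matrix of period $n_i$ whose entries repeat $a(1),\dots,a(n_i)$ and $b(1),\dots,b(n_i)$. Its one–period transfer matrix is $\Phi_{n_i}$, so its discriminant is $\Delta_{n_i}=\tr\Phi_{n_i}$, a real polynomial of degree $n_i$ with leading coefficient $A_{n_i}^{-n_i}$. Since the top–left $n_i\times n_i$ block of $J(a,b)$ coincides with that of $\tilde J^{(i)}$, the counting function $k_{n_i}$ is exactly the density of states of $\tilde J^{(i)}$, which (by the same reasoning used for period $q$ above) has one atom of mass $1/n_i$ in each of the $n_i$ bands. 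Applying Theorem~\ref{thm:per} to $\tilde J^{(i)}$ bounds each band $B_m^{(i)}$ (ordered from the right) by $|B_m^{(i)}|\le 2A_{n_i}\big[\cos\tfrac{\pi(m-1)}{n_i}-\cos\tfrac{\pi m}{n_i}\big]$. The bands meeting $I_j=(p_j,p_{j-1})$ are, up to the at most two that straddle the endpoints, exactly the indices $m$ whose atom has $k_{n_i}$–value in $\big(k_{n_i}(p_j),k_{n_i}(p_{j-1})\big)$. I would sum the band estimate over this block; the sum telescopes to
\[ \sum_{m=m_1}^{m_2} 2A_{n_i}\Big[\cos\tfrac{\pi(m-1)}{n_i}-\cos\tfrac{\pi m}{n_i}\Big] = 2A_{n_i}\Big[\cos\tfrac{\pi(m_1-1)}{n_i}-\cos\tfrac{\pi m_2}{n_i}\Big], \]
and, using $k_{n_i}\to k_{\{n_i\}}$ together with $k_{\{n_i\}}(p_j)=(q-j)/q$ and the continuity of the cosine, the right–hand side converges to $2A_{n_i}\big[\cos\tfrac{\pi(j-1)}{q}-\cos\tfrac{\pi j}{q}\big]$ (the two straddling bands contribute $O(A_{n_i}/n_i)=o(1)$). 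Passing to $\liminf_i$ along a subsequence realising $A_-$ then yields the stated right–hand side.

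The remaining task is to show that $\Sigma_\text{ac}(J)\cap I_j$ is, up to arbitrarily small measure, contained in this block of bands of $\tilde J^{(i)}$. As in the proof of Theorem~\ref{thm:1.1}, I would use the subordinacy inclusion (\ref{eq:subord}), the subexponential bound (\ref{eq:subord.1}), and Egoroff's theorem to produce, for each $\epsilon>0$, a set $\Sigma^\epsilon\subset\Sigma_\text{ac}(J)$ with $|\Sigma_\text{ac}(J)\setminus\Sigma^\epsilon|<\epsilon$ on which $|\Delta_{n_i}(E)|\le e^{n_i\epsilon_{n_i}}$ with $\epsilon_{n_i}\to 0$. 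Identifying the correct block of indices is then automatic: a point of $\Sigma^\epsilon\cap(p_j,p_{j-1})$ that lies on the spectrum $\{|\Delta_{n_i}|\le 2\}$ of $\tilde J^{(i)}$ belongs to a band whose index, by the convergence $k_{n_i}\to k_{\{n_i\}}$ and the choice of the $p_j$, falls in the block singled out above. Combining this with the telescoped estimate of the first paragraph gives $|\Sigma^\epsilon\cap I_j|\le 2A_{n_i}\big[\cos\tfrac{\pi(j-1)}{q}-\cos\tfrac{\pi j}{q}\big]+o(1)$, from which the Lemma follows by letting first $i\to\infty$ and then $\epsilon\to 0$.

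I expect the main obstacle to be precisely the gap between the Egoroff bound $e^{n_i\epsilon_{n_i}}$ and the threshold $2$: on $\Sigma^\epsilon$ we control $|\Delta_{n_i}|$ only subexponentially, so $\Sigma^\epsilon\cap I_j$ may protrude into the gaps of $\tilde J^{(i)}$, where $2<|\Delta_{n_i}|\le e^{n_i\epsilon_{n_i}}$, and I must show this "fattened" part contributes only $o(1)$ to the measure. The naive route — applying P\'olya's inequality (Proposition~\ref{prop:mescap}) to the sub-level set $\{|\Delta_{n_i}|\le e^{n_i\epsilon_{n_i}}\}$ — controls the \emph{total} protruding measure by $4A_{n_i}e^{\epsilon_{n_i}}(1+o(1))$, but this reproduces only the Poltoratski--Remling constant $4$ and not the sharp cosine difference; the real difficulty is to \emph{localise} this control to the window $I_j$. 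Concretely, I would bound the protruding measure gap by gap as $\int_2^{\exp(n_i\epsilon_{n_i})}|\Delta_{n_i}'(E)|^{-1}\,dv$ (with $v=\Delta_{n_i}(E)$), sum over the gaps lying in $I_j$, and argue that, since $\epsilon_{n_i}\to 0$, this total vanishes in the limit; equivalently, writing $|\Delta_{n_i}|=2\cosh\!\big(n_i\gamma_i\big)$ in the gaps, the protruding set is $\{0<\gamma_i\le\epsilon_{n_i}\}$, on which the Lyapunov exponent of $\tilde J^{(i)}$ is negligible. It is this windowed fattening estimate, rather than the telescoping of Theorem~\ref{thm:per}, where the essential work lies, and I would expect to verify it by combining the per-gap derivative bound with the global P\'olya measure estimate to squeeze the excess down to $o(1)$.
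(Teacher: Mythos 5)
Your reduction to the periodised operator is exactly the paper's: periodise the first $n_i$ entries so that $\Delta_{n_i}=\tr\Phi_{n_i}$ is the discriminant, telescope the band bounds of Theorem~\ref{thm:per} over the block of bands whose $k_{n_i}$-mass sits between $(j-1)/q$ and $j/q$, and use Khan--Pearson plus Egoroff to get a set $\Sigma^\epsilon$ on which $|\Delta_{n_i}|^{1/n_i}\leq 1+\delta_i$ with $\delta_i\to0$. You also correctly identify the crux: $\Sigma^\epsilon$ is contained only in the fattened sub-level set $S_i^+=\{|\Delta_{n_i}|^{1/n_i}\leq 1+\delta_i\}$, not in the bands $S_i=\{|\Delta_{n_i}|\leq 2\}$, so the telescoped bound for $|I_j\cap S_i|$ does not yet bound $|\Sigma^\epsilon\cap I_j|$.

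However, your proposed resolution of that crux does not work, and this is a genuine gap. The identity $\int_2^{\exp(n_i\epsilon_{n_i})}|\Delta_{n_i}'(E)|^{-1}\,dv$ is only a change of variables, not an estimate: to conclude anything you need a lower bound on $|\Delta_{n_i}'|$ in the gaps, and it is precisely in shallow gaps --- where the local maximum of $|\Delta_{n_i}|$ barely exceeds $2$ --- that the derivative is small and the \emph{entire} gap is swallowed by $S_i^+$. In your equivalent formulation, the protruding set is $\{0<\gamma_i\leq\epsilon_{n_i}\}$, but the function $\gamma_i$ changes with $i$, so no continuity-of-measure argument applies: nothing prevents the periodisations of a non-periodic $J$ from exhibiting, at every scale $n_i$, shallow gaps of total length bounded below with $\gamma_i\asymp\epsilon_{n_i}$, and one should not expect the protrusion to be $o(1)$ in general. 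The paper circumvents this by never comparing $S_i^+$ with $S_i$ gap by gap; instead it bounds $|I_j\cap S_i^+|$ directly. First, the deformation argument of Proposition~\ref{prop:formula} (reused from the proof of Theorem~\ref{thm:per}) shows that $|I_j\cap S_i^+|$ is maximal when $S_i^+$ is a single interval; in that extremal case P\'olya's inequality pins down $|S_i^+|=4A_-(1+o(1))$, hence the monic polynomials $A_{n_i}^{n_i}\Delta_{n_i}$ are asymptotically extremal on $S_i^+$ in the sense of (\ref{eq:asympextr}), and Szeg\H{o}'s theorem \cite{Sz} (see also \cite{BlS}) then forces their zeros to follow asymptotically the Chebyshev (arcsine) distribution of the true extremal polynomials. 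Since by the choice of the $p_j$ the fraction of zeros of $\Delta_{n_i}$ in $I_j$ is asymptotically $1/q$ (with fractions $(j-1)/q$ and $(q-j)/q$ on the two sides), the arcsine quantile computation gives $\lim_i|I_j\cap S_i^+|\leq 2A_-\left[\cos\frac{\pi(j-1)}{q}-\cos\frac{\pi j}{q}\right]$. In short, the localisation to the window $I_j$ that you flagged as the essential difficulty is supplied by zero-distribution asymptotics for asymptotically extremal polynomials, not by per-gap derivative bounds; without importing that (or an equivalent) mechanism, your argument does not close.
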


The lemma implies the theorem. Indeed, let
\[ m(E) = \left|\Sigma_\text{ac}(J(a,b)) \cap (-\infty, E]\right|~. \]
We have:
\[ m(p_{j}) - m(p_{j-1}) \leq 2 A_- \left[ \cos \frac{\pi (j-1)}{q}  - \cos \frac{\pi j}{q} \right]~; \]
applying this with every $q$ and $1 \leq j \leq q-1$, we obtain
\[ \frac{dm(E)}{dE} \leq 2 \pi A_- \sin \left( \pi k_{\{n_i\}}(E) \right) \frac{dk_{\{n_i\}}(E)}{dE} \]
for a.e.\ $E \in \Sigma_\text{ac}(J(a,b))$.  According to the Lebesgue theorem,
almost every $E \in \Sigma_\text{ac}(J(a, b))$ is a Lebesgue point, that is,
$ \frac{dm(E)}{dE}=1$ for a.e.\ $E \in \Sigma_\text{ac}(J(a,b))$. Therefore the
desired inequality follows.

\begin{proof}[Proof of Lemma~\ref{lem}]
Consider the polynomials $\Delta_{n}$. Passing to a subsequence, we can assume
that
\begin{equation}\label{eq:subseq}
\lim_{i \to \infty} \LC(\Delta_{n_i})^{1/n_i} = 1 / A_-~.
\end{equation}

\noindent For any $\epsilon > 0$ one can find a set $S_i^0$ such that
\[ \left| \Sigma_\text{ac}(J(a,b)) \setminus S_i^0 \right| \leq \epsilon~, \]
and
\[ \lim_{i \to \infty} \max_{E \in S_i^0} |\Delta_{n_i}|^{1/n_i} \leq 1~. \]
This follows from the Khan--Pearson theorem combined with Egoroff's theorem,
as in the proof of Theorem~\ref{thm:1.1}.

We choose $\delta_i \to 0$ so that
\[ \max_{E \in S_i^0} |\Delta_{n_i}|^{1/n_i} \leq 1 + \delta_i~, \]
and let
\begin{equation}\label{eq:si+}
S_i^+ = \left\{ E \, \mid |\Delta_{n_i}|^{1/n_i} \leq 1 + \delta_i \right\}~.
\end{equation}
The polynomial $\Delta_{n_i}$ coincides with the discriminant $\Delta$ of a periodic
operator of period $n_i$ obtained by periodising the first $n_i$ values of $a, b$. Therefore
the smaller set $S_i = \left\{ |\Delta_{n_i}| \leq 2 \right\} \subset S_i^+$ (see Figure~2) consists of $n_i$
bands, and we have:
\[ \left| I_j \cap S_i \right|
    \leq  2 A_- (1 + o(1)) \left[ \cos \frac{\pi (j-1)}{q}  - \cos \frac{\pi j}{q} \right]\]
by (\ref{eq:subseq}) and the periodic case that we have already considered. Let us show that the last
inequality holds also for $|I_j \cap S_i^+|$.

\begin{figure}[h]
\vspace{2.5cm}
\setlength{\unitlength}{1cm}
\begin{pspicture}(-2,-2)
\psline(0,0)(10,0)
\psset{arrows=-}
\psplot[plotstyle=curve]{.6}{9.1}{ x 1 sub x 3 sub mul x 6.5 sub mul x 8.7 sub mul 20 div}
\psline(0,1)(10,1)
\psline(0,-1)(10,-1)
\psline[linestyle=dotted](0,1.8)(10,1.8)
\psline[linestyle=dotted](0,-1.8)(10,-1.8)
\psline(0.65, .2)(3.98, .2)
\psline(5.5, .2)(7.5,.2)
\psline(8.1, .2)(9, .2)
\psline[linewidth=.09](0.75, 0)(1.4, 0)
\psline(.75, .1)(.75, -.1)
\psline(1.4,.1)(1.4,-.1)
\psline(2.42,.1)(2.42,-.1)
\psline(3.52,.1)(3.52,-.1)
\psline[linewidth=.09](2.42, 0)(3.52, 0)
\psline(6,.1)(6,-.1)
\psline(7,.1)(7,-.1)
\psline[linewidth=.09](6, 0)(7, 0)
\psline(8.42,.1)(8.42,-.1)
\psline[linewidth=.09](8.42, 0)(8.9, 0)
\psline(8.9,.1)(8.9, -.1)
\rput(-.5,1){$+2$}
\rput(-.5,-1){$-2$}
\rput(-1,1.8){$(1+\delta_i)^{n_i q}$}
\rput(-1,-1.8){$-(1+\delta_i)^{n_i q}$}
\end{pspicture}
\caption[Fig. 1]{$S_i$ (bold) and $S_i^+$ (drawn slightly above)}\label{fig:2}
\end{figure}
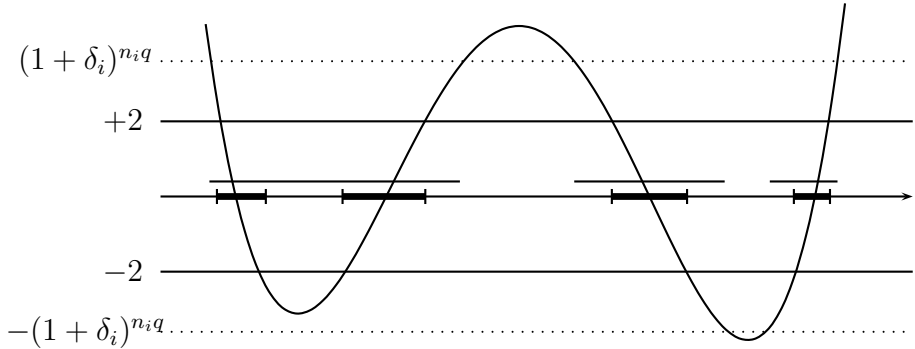

Applying Proposition~\ref{prop:formula} similarly to the proof of Theorem~\ref{thm:per},
we see that the measure $\left| I_j \cap S_i^+ \right|$ is maximal if $S_i^+$ is an interval
(if it has gaps, the length increases if one closes them.) Let us study this extremal case.

The interval $S_i^+$ contains a subset $S_i$ with $L_{n_i}(S_i)^{1/n_i} \to A_-$, and on the other
hand $L_{n_i}(S_i^+)^{1/n_i} \leq A_-(1 + o(1))$ by construction (\ref{eq:si+}). Therefore
$L_{n_i}(S_i^+)^{1/n_i} = A_- (1 + o(1))$. Since $S_i^+$ is an interval, and
$L_n(I) = |I|^n / 2^{2n-1}$ for any interval $I$ and any $n \in \N$, we deduce that
$ |S_i^+| = 4 A_- (1 + o(1)) $. Therefore the polynomials $A_{n_i}^{n_i} \Delta_{n_i}$
are asymptotically extremal in the definition of $L_{n_i}$, namely:
\begin{equation}\label{eq:asympextr}
\lim_{i \to \infty} \max_{E \in S_i^+} |A_{n_i}^{n_i} \Delta_{n_i}(E)|^{1/n_i} =
    \lim_{i \to \infty} L_{n_i}(S_i^+)^{1/n_i}~.
\end{equation}

Now we appeal to Szeg\H{o}'s theorem \cite{Sz} (see Blatt and Saff \cite{BlS}
for a more recent discussion of this result, as well as numerous generalisations.) It states
that, under the assumption (\ref{eq:asympextr}), the distribution of the zeros of
$A_{n_i}^{n_i} \Delta_{n_i}$ is asymptotically the same as that of the extreme polynomials
$P_{n_i}$ in the definition of $L_{n_i}$, which are the (properly scaled) Chebyshev polynomials
of the first kind.

Recall that, according to the definition of the points $p_j$, the fraction of zeros of
$\Delta_{n_i}$ that fall into $I_j$ is asymptotically $1/q$, and,
similarly, the fractions that fall into the two half-lines of its complement are asymptotically
$(j-1)/q$ and $(q-j)/q$ (respectively). Therefore the same holds for $P_{n_i}$, and hence
\[ \lim_{i \to \infty} |I_j \cap S_i^+|
    = 2 A_- \left[ \cos \frac{\pi (j-1)}{q}  - \cos \frac{\pi j}{q} \right]~.\]

We have derived the last equality for the case when the left-hand side is maximal, hence
in the general case we have the inequality
\[ \lim_{i \to \infty} |I_j \cap S_i^+|
    \leq 2 A_- \left[ \cos \frac{\pi (j-1)}{q}  - \cos \frac{\pi j}{q} \right] \]
that was claimed. Letting $\epsilon \to 0$ we conclude the
proof of the lemma.

\end{proof}

\end{document}